\providecommand{\U}[1]{\protect\rule{.1in}{.1in}}
\def\R{\mathbb R}
\newtheorem{theorem}{Theorem}
\theoremstyle{plain}
\newtheorem{proposition}{Proposition}
\numberwithin{equation}{section}
\begin{document}
\title[The Ptolemaean inequality in $H$--type groups]{The Ptolemaean inequality in $H$--type groups}

\author{A. Fotiadis}
\address{A. Fotiadis\\Department of Mathematics\\ Aristotle University of Thessaloniki\\
Thessaloniki 54.124\\ Greece }
\email{fotiadisanestis@math.auth.gr}
\author{I.D. Platis}
\address{I.D. Platis\\Department of Mathematics\\ University of Crete\\ 70013, Heraklion, Crete \\ Greece }
\email{jplatis@math.uoc.gr} \keywords{Ptolemaean spaces, Carnot
groups, H--groups, Cross--ratios \\Subject Classification (2010)
53C17.}

\begin{abstract}
We prove the Ptolemaean Inequality and the Theorem of Ptolemaeus
in the setting of $H$--type groups of Iwasawa--type.

\end{abstract}

\maketitle

\section{Introduction}

The purpose of this short note is to give a proof of yet another
generalization of the Ptolemaean Inequality, this time within the
context of a class of Carnot groups. The Ptolemaean Inequality in
planar Euclidean geometry states that given a quadrilateral, then
the product of the euclidean lengths of the diagonals is less or
equal to the sum of the products of the euclidean lengths of its
opposite sides; moreover,  equality holds if and only if the
quadrilateral is inscribed in a circle (Ptolemaeus' Theorem).

It turns out that Ptolemaean Inequality is of intrinsic nature,
and over the past years it has been the subject of  an ongoing
extensive study of many authors who proved its generalization in
various settings. Entirely illustratively, we mention its
generalizations to  normed spaces \cite{S}, CAT(0) spaces
\cite{B-F-W}, M\"{o}bius spaces \cite{B-S}, etc.

In general, we consider in abstract  metric spaces where the
Ptolemaean Inequality holds. To that end, let $(X,d)$ be a metric
space. The metric $d$ is called {\it Ptolemaean} if any four
distinct points $p_1$, $p_2$, $p_3$ and $p_4$ in $X$ satisfy the
Ptolemaean Inequality; that is, for any permutation $(i,j,k,l)$ in
the permutation group $S_4$  we have
\begin{equation}\label{ptolemaean inequality}
d(p_i,p_k)\cdot d(p_j,p_l)\le d(p_i,p_j)\cdot
d(p_k,p_l)+d(p_j,p_k)\cdot d(p_l,p_i).
\end{equation}
In a Ptolemaean space $(X,d)$, we are most interested in the
counterparts of Euclidean circles, that is sets where Ptolemaean
Inequality holds as an equality (Ptolemaeus' Theorem). Explicitly,
a subset $\Sigma$ of $X$ is called a {\it Ptolemaean circle} if
for any four distinct points $p_1, p_2, p_3, p_4 \in \Sigma$ such
that $p_1$ and $p_3$ separate $p_2$ and $p_4$ we have
$$
d(p_1,p_3)\cdot d(p_2,p_4) = d(p_1,p_2)\cdot d(p_3,p_4) +
d(p_2,p_3)\cdot d(p_4,p_1).
$$
In the present article we prove the Ptolemaean Inequality along
with the Theorem of Ptolemaeus in the setting of $H-${\it type
Iwasawa groups $\mathbb{G}$}. These groups form a class of Carnot
groups of step 2 endowed with a metric $d$ (see \ref{metric} and
some additional strong geometric properties, i.e. the existence of
a nice inversion mapping as well as the existence of Ptolemaean
circles which are the $\mathbb{R}-$circles, see Section \ref{H-groups} for further details and
compare also with the properties (I) and (E) in \cite{B-S}. Our
main theorem is the following.
\begin{theorem}
Let $\mathbb{G}$ be an $H-$type  Iwasawa group. Then the metric
$d$ defined in (\ref{metric}) is Ptolemaean and its Ptolemaean
circles are $\mathbb{R}-$circles.
\end{theorem}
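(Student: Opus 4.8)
The plan is to derive the Ptolemaean inequality \eqref{ptolemaean inequality} from the ordinary triangle inequality by conjugating with the inversion, precisely as in the classical M\"{o}bius argument. Write $N(p)=d(p,o)$ for the gauge (the distance from the identity $o$) and let $\iota$ be the inversion of $\mathbb{G}$ centred at $o$. The single fact on which everything rests is the \emph{inversive distance formula}
$$d(\iota(p),\iota(q)) = \frac{d(p,q)}{N(p)\,N(q)}, \qquad p,q\neq o,$$
the non-commutative counterpart of the Euclidean identity $|\iota(x)-\iota(y)|=|x-y|/(|x|\,|y|)$. All the structure recorded for $d$ and $\iota$ in Section~\ref{H-groups} should be aimed at establishing this identity.

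Granting it, the inequality is a one-line computation. Fix four distinct points and, using left-invariance of $d$, translate so that $p_4=o$; this preserves every mutual distance and turns $d(p_i,p_4)$ into $N(p_i)$. Setting $q_i=\iota(p_i)$, the formula gives $d(q_i,q_j)=d(p_i,p_j)/(N(p_i)N(p_j))$, so the triangle inequality $d(q_1,q_3)\le d(q_1,q_2)+d(q_2,q_3)$ becomes, after clearing the common factor $N(p_1)N(p_2)N(p_3)=d(p_1,p_4)d(p_2,p_4)d(p_3,p_4)$,
$$d(p_1,p_3)\,d(p_2,p_4)\ \le\ d(p_1,p_2)\,d(p_3,p_4)+d(p_2,p_3)\,d(p_1,p_4).$$
This is \eqref{ptolemaean inequality} for the order $(1,2,3,4)$; an arbitrary permutation is handled identically by taking $p_l$ as the centre of inversion.

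For Ptolemaeus' Theorem I would trace the equality case through the same reduction. Equality in \eqref{ptolemaean inequality} holds if and only if $d(q_1,q_3)=d(q_1,q_2)+d(q_2,q_3)$, that is, if and only if $q_2$ lies metrically between $q_1$ and $q_3$. The plan is to use the property, to be established in Section~\ref{H-groups}, that $\iota$ sends $\mathbb{R}$-circles through $o$ to the curves realising equality in the triangle inequality, and conversely. The cyclic hypothesis that $p_1,p_3$ separate $p_2,p_4$ on the $\mathbb{R}$-circle translates, after sending $p_4$ to $o$ and inverting, exactly into the betweenness of $q_2$; hence equality holds on an $\mathbb{R}$-circle, and no other ordered quadruple can produce it.

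Two points will require real work. The first, and main, obstacle is the inversive distance formula: proving it is a direct but delicate computation in $H$-type coordinates, and it is here that the defining relations of an Iwasawa $H$-type group---the precise interaction between the horizontal layer and the centre---must be invoked to make $\iota$ conformal for $d$. The second is the equality analysis: because the gauge metric is not a length metric, identifying the curves of metric betweenness and confirming that they are the $\iota$-images of $\mathbb{R}$-circles is best carried out by parametrising an $\mathbb{R}$-circle explicitly and checking additivity of $d$ in the correct cyclic order.
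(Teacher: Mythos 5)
Your proposal is essentially the paper's own argument: both rest on the inversive distance formula $d(\sigma(p),\sigma(q))=d(p,q)/\bigl(d(p,0)\,d(0,q)\bigr)$ to reduce the Ptolemaean inequality, after sending one point of the quadruple to the origin and one to infinity, to the triangle inequality for $d$, and both settle Ptolemaeus' Theorem via the equality case of that triangle inequality. The paper merely packages the normalization through similarity-invariant cross--ratios and supplies the one ingredient you defer to ``real work,'' namely the full characterization in Proposition \ref{prop-triangle} that equality in ${\widetilde d}(p^{-1}q)\le{\widetilde d}(p)+{\widetilde d}(q)$ forces $p=\exp(x,0)$, $q=\exp(\lambda x,0)$, which is exactly what pins the equality locus to a standard $\mathbb{R}$--circle (and yields the converse direction your ``parametrise and check additivity'' step alone would not).
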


The proof of our main result  follows immediately from a simple algebraic relation between  cross--ratios
which are defined in $\mathbb{G}$, see Section \ref{proof}.
Cross--ratios have been used in \cite{P}, to show that generalized
Heisenberg groups with the Kor\'anyi--Cygan metric are Ptolemaean.
It turns out that we can follow the same, but considerably
simplified, line of proof  in the context of $H-$type Iwasawa
groups.

\section{$H-$type  Iwasawa groups}\label{H-groups}

In this section we briefly recall the definition and some basic
properties of $H-$type Iwasawa groups.

Let $\mathfrak{g}$ be a finite--dimensional Lie algebra endowed
with a left invariant inner product $<,>$. Let $\mathfrak{j}$ be
the center of $\mathfrak{g}$ and let also  $\mathfrak{b}$ be the
orthogonal completion of $\mathfrak{j}$ in $\mathfrak{g}$. For
fixed $t\in \mathfrak{j}$ consider the map $J_{t}\colon
\mathfrak{b}\rightarrow \mathfrak{b}$ defined by
$<J_{t}(x),y>=<t,[x,y]>$, where $[\cdot,\cdot]$ is the Lie bracket
of $\mathfrak{g}$. Then, $\mathfrak{g}$ is called an
\emph{$H$--type algebra} if
$[\mathfrak{b},\mathfrak{b}]=\mathfrak{j}$ and moreover $J_{t}$ is
an orthogonal map whenever $<t,t>=1$. An \emph{$H$--type group}
$\mathbb{G}$ is a connected and simply connected Lie group whose
Lie algebra is an $H$--type algebra.

$H$--type groups is a class of step 2 Carnot groups that
generalize the classical Heisenberg group, see for instance \cite[p.12,
p.682]{B-L-U}. More precisely, if $m=\dim{\mathfrak{b}},
n=\dim{\mathfrak{j}}$ and $N=m+n$, then $\mathbb{G}$ is a
homogeneous Carnot group on $\mathbb{R}^{N}$ with dilations
$\delta_{\lambda}(x,t)=(\lambda x, \lambda^{2} t), \; x\in
\mathbb{R}^{m}, \;t\in \mathbb{R}^{n},$ where $\lambda>0$.

Let $\mathbb{G}$ be an $H$--type group. Then, $\mathbb{G}$ is
called an $H$--type \emph{Iwasawa group}  if
for every $x\in \mathfrak{b}$ and for every $t,t^{'}\in
\mathfrak{j}$ with $<t,t^{'}>=0$, there exists $t^{''}\in
\mathfrak{j}$ such that $J_{t}(J_{t^{'}}(x))=J_{t^{''}}(x)$, \cite[p.23]{C-D-K-R}. For
example, any Heisenberg group $\mathbb{H}^{N}$ is an $H$--type
Iwasawa group, \cite[p.702]{B-L-U}.

From now on $\mathbb{G}$ shall always denote an $H$--type Iwasawa
group. Since the exponential mapping is a bijection of
$\mathfrak{g}$ onto $\mathbb{G}$, we shall parametrize $p$ in
$\mathbb{G}$ by $(x, t)\in
\mathfrak{b}\oplus\mathfrak{j}=\mathfrak{g}$, where
$p=\exp{(x,t)}$.  Multiplication in $\mathbb{G}$ is of a special
form, see \cite[p.687]{B-L-U}: there exist $m\times m$ skew
symmetric and orthogonal matrices $U^{1},\ldots,U^{n}$ such that
\begin{eqnarray*}
(x,t)(x^{'},t^{'})&=&\left(x+x', t+t'+\frac{1}{2}[x,x']\right)\\
&=&\left(x+x^{'},t^{1}+{t^{'}}^{1}+<U^{1}x,x^{'}>,\dots,t^{n}+{t^{'}}^{n}+<U^{n}x,x^{'}>\right),
\end{eqnarray*}
for all $(x,t),(x^{'},t^{'})\in \mathbb{G}$. For example, in the
case of the classical Heisenberg group $\mathbb{H}^{1}$ we have
$m=2,n=1$ and
$U^{1}=\left(%
\begin{array}{cc}
  0 & 1 \\
  -1 & 0 \\
\end{array}%
\right)$.

We  note that $(x,t)^{-1}=(-x,-t)$ and also that the matrices
$U^{1},\ldots,U^{n}$ have the following property:
$$
U^{i}U^{j}+U^{j}U^{i}=0,\text{ for every }i,j\leq n \text{ with }
i\neq j.
$$
The  distance $d$ in $\mathbb{G}$ is defined via a gauge
${\widetilde d}$ in a manner similar to the Heisenberg group case.
If $p\in\mathbb{G}$ is parametrized
by $(x,t)\in\mathbb{R}^n\times\mathbb{R}^n$ we set
\begin{equation}
\widetilde{d}(p)=\left( |x|^{4}+16|t|^{2}\right)^{\frac{1}{4}}.
\end{equation}
For our purposes and for clarity as well, we shall prove
\begin{proposition}\label{prop-triangle}
For every $p,q\in\mathbb{G}$ we have
\begin{equation}\label{ineq-triangle}
{\widetilde d}(p^{-1}q)\le {\widetilde d}(p)+{\widetilde d}(q).
\end{equation}
Equality holds if and only if $p=\exp(x,0)$, $q=\exp(\lambda x,0)$ for some $\lambda\in\R$.
\end{proposition}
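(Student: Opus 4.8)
The plan is to turn the group inequality into an elementary extremal problem. First I would use the multiplication law to compute $p^{-1}q$ explicitly: writing $p=\exp(x,t)$ and $q=\exp(y,s)$, one has $p^{-1}=\exp(-x,-t)$ and hence $p^{-1}q=\exp\bigl(y-x,\;s-t-\omega\bigr)$, where $\omega\in\mathfrak{j}$ has components $\omega^{i}=\langle U^{i}x,y\rangle$. Therefore
\[
\widetilde d(p^{-1}q)^{4}=|y-x|^{4}+16\,|s-t-\omega|^{2},
\]
and, since both sides of (\ref{ineq-triangle}) are nonnegative, the inequality is equivalent to $\widetilde d(p^{-1}q)^{4}\le\bigl(\widetilde d(p)+\widetilde d(q)\bigr)^{4}$. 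This reduces everything to an inequality among the scalars $|x|,|y|,|t|,|s|$, the bilinear quantity $\langle x,y\rangle$, the defect $|\omega|$, and the relative positions of $t,s,\omega$ inside $\mathfrak{j}$.

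The key structural input is the estimate
\[
\langle x,y\rangle^{2}+|\omega|^{2}\le |x|^{2}\,|y|^{2},
\]
which is what couples the horizontal and the vertical terms. I would prove it by noting that the relations $(U^{i})^{\top}=-U^{i}$, $(U^{i})^{2}=-I$ and $U^{i}U^{j}+U^{j}U^{i}=0$ for $i\neq j$ force the $n+1$ vectors $x,U^{1}x,\dots,U^{n}x$ to be mutually orthogonal, each of length $|x|$; Bessel's inequality applied to $y$ then yields the bound, with equality precisely when $y$ lies in their linear span. Rewritten as $|\omega|^{2}\le |x|^{2}|y|^{2}-\langle x,y\rangle^{2}$, it makes the trade-off explicit: the more $x$ and $y$ are aligned, the smaller the vertical defect $\omega$ is forced to be.

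For the main estimate I would exploit the Iwasawa hypothesis, which is exactly what identifies $\mathfrak{j}$ with $\mathrm{Im}\,\mathbb{A}$ for a normed division algebra $\mathbb{A}\in\{\R,\mathbb{C},\mathbb{H},\mathbb{O}\}$, so that $\omega=\mathrm{Im}(\overline{x}y)$ and the norm appearing in $\widetilde d$ becomes the multiplicative algebra norm. Using $|s-t-\omega|\le|s-t|+|\omega|$ together with the structural estimate to replace $|\omega|$ by its maximal admissible value $\sqrt{|x|^{2}|y|^{2}-u^{2}}$, I would reduce the claim to a single–variable problem in $u=\langle x,y\rangle\in[-|x||y|,|x||y|]$ with $|x|,|y|,|t|,|s|$ fixed. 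A direct calculus/convexity argument then shows that the resulting function of $u$ attains its maximum at the endpoint $u=-|x||y|$, where $\omega=0$ and $x,y$ are antiparallel; there the inequality collapses to the purely horizontal statement $\widetilde d(p^{-1}q)^{4}\le(|x|+|y|)^{4}$, which is an identity. I expect this extremal step — showing that, with the normalizing constant $16$ balanced against the normalization of the bracket, the vertical contribution never overtakes the horizontal growth — to be the main obstacle of the argument.

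Finally, for the equality assertion I would run the chain of inequalities backwards. Equality can survive only if the triangle inequality $|s-t-\omega|\le|s-t|+|\omega|$ is sharp, the structural estimate is sharp, and the one–variable maximum is attained at the endpoint; together these force $\omega=0$, $t=s=0$ and $x,y$ to be proportional over $\R$, i.e. $p=\exp(x,0)$ and $q=\exp(\lambda x,0)$ (geometrically, that the identity $e$ lies on the horizontal segment joining $p$ and $q$). The degenerate cases $x=0$ or $y=0$ I would dispose of separately, where the inequality is immediate.
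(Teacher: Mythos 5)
Your setup (reduce to fourth powers, compute $p^{-1}q$ explicitly) and your structural estimate $\langle x,y\rangle^{2}+|\omega|^{2}\le|x|^{2}|y|^{2}$ are sound, and the Bessel-inequality derivation of the latter from the mutual orthogonality of $x,U^{1}x,\dots,U^{n}x$ is a nice self-contained alternative to the paper's appeal to Cygan's hermitian form. The gap is in the extremal step that you yourself flag as the main obstacle: both claims you make there are false. After replacing $|s-t-\omega|$ by $|s-t|+|\omega|$ and $|\omega|$ by its maximal value $\sqrt{|x|^{2}|y|^{2}-u^{2}}$, the resulting function of $u=\langle x,y\rangle$ is \emph{not} maximized at the endpoint $u=-|x||y|$: writing $a=|x|$, $b=|y|$, $c=|s-t|$, the $u$-dependent part of the fourth power is of the form $-4u(a^{2}+b^{2})+\mathrm{const}\cdot c\sqrt{a^{2}b^{2}-u^{2}}$, whose maximizer moves into the interior of $[-ab,ab]$ as soon as $c>0$ (for $a=b=1$ and $c$ moderately large the value at $u=0$ already exceeds the value at $u=-1$). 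Moreover, even at the endpoint the inequality does not ``collapse to an identity'': what remains is $(a+b)^{4}+16(|s|+|t|)^{2}\le\bigl(\widetilde d(p)+\widetilde d(q)\bigr)^{4}$, which is a genuine inequality (strict unless $s=t=0$) and still needs proof. Since your equality analysis rests on the endpoint being the unique maximizer, it inherits the same problem.

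The underlying difficulty is that your two successive estimates decouple the direction of $\omega$ from that of $s-t$ and from the horizontal term, discarding exactly the correlation that makes the inequality tight. The paper keeps these coupled: it expands $\widetilde d(p^{-1}q)^{4}$ completely and matches it term by term against the binomial expansion of $\bigl(\widetilde d(p)+\widetilde d(q)\bigr)^{4}$, the decisive move being the Cauchy--Schwarz pairing of the vector $(|x|^{2},4t)$ with $(\langle x,x'\rangle,[x,x'])$, which bounds the mixed terms by $\widetilde d(p)^{2}\bigl(\langle x,x'\rangle^{2}+|[x,x']|^{2}\bigr)^{1/2}\le\widetilde d(p)^{3}\widetilde d(q)$ after one more use of the structural estimate; the equality case then falls out by tracking when each of the four Cauchy--Schwarz applications is sharp. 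If you want to salvage your route you would have to prove $\max_{u}f(u)\le\bigl(\widetilde d(p)+\widetilde d(q)\bigr)^{4}$ directly (the maximum can be computed in closed form by Cauchy--Schwarz in $u$), but this is messier than the paper's term-by-term matching and is not what your endpoint argument delivers. The detour through normed division algebras is also unnecessary: the structural estimate needs only the $H$-type relations, not the Iwasawa hypothesis.
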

\begin{proof}
We parametrize   $p^{-1}$ by $(x,t)$ and $q$ by $(x',t)$; then,
$$
{\widetilde
d}(p^{-1}q)^4=|x+x'|^4+16\left|t+t'+\frac{1}{2}[x,x']\right|^2,
$$
which  is equal to
\begin{eqnarray*}
{\widetilde d}(p)^4+{\widetilde d}(q)^4&+&4\left(<x,x'>^2+|[x,x']|^2\right)\\
&+&4\left(|x|^2<x,x'>+4<t,[x,x']>\right)\\
&+&4\left(|x'|^2<x,x'>+4<t',[x,x']>\right)\\
&+& 2\left(|x|^2|x'|^2+16<t,t'>\right).
\end{eqnarray*}
The following inequalities hold.
\begin{eqnarray}
&&\label{ineq1}
\left(|x|^2|x'|^2+16<t,t'>\right)\le {\widetilde d}(p)^2{\widetilde d}(q)^2,\\
&&\label{ineq2}
\left(|x|^2<x,x'>+4<t,[x,x']>\right)\le {\widetilde d}(p)^2\left(<x,x'>^2+|[x,x']|^2\right)^{1/2},\\
&&\label{ineq3}
\left(|x'|^2<x,x'>+4<t',[x,x']>\right)\le {\widetilde d}(q)^2\left(<x,x'>^2+|[x,x']|^2\right)^{1/2},\\
&&\label{ineq4} \left(<x,x'>^2+|[x,x']|^2\right)\le
|x|^2|x'|^2\leq {\widetilde d}(p)^2 {\widetilde d}(q)^2.
\end{eqnarray}
The first three inequalities are immediate. The  last inequality
follows by applying the Cauchy--Schwarz inequality to Cygan's
hermitian form \cite[p.70]{C}
$$
h(x,x')=<x,x'>+i[x,x'].
$$
Combining the above we have:
\begin{equation}\begin{split}
{\widetilde d}(p^{-1}q)^4\le & {\widetilde d}(p)^4+{\widetilde
d}(q)^4 +6{\widetilde d}(p)^2 {\widetilde d}(q)^2+4 {\widetilde
d}(p)^3 {\widetilde d}(q)+4 {\widetilde d}(p) {\widetilde
d}(q)^3\\
=&\left( {\widetilde d}(p)+{\widetilde d}(q)\right)^{4},
\end{split}
\end{equation}
thus we obtain the desired triangle inequality.

Observe now that equality in (\ref{ineq-triangle}) holds if and
only if (\ref{ineq1}), (\ref{ineq2}), (\ref{ineq3}) and
(\ref{ineq4})  hold simultaneously as equalities. Therefore, from
the last inequality considered as an equality we have $x'=\lambda
x$. Hence all other inequalities hold as equalities if and only if
$t=t'=0$.

\end{proof}

The distance $d$ in $\mathbb{G}$ is  defined by
\begin{equation}\label{metric}
d(p,q)=\widetilde{d}(p^{-1}q).
\end{equation}
It is completely obvious that $d(p,q)=0$ if and only if $p=q$,
$d(p,q)=d(q,p)$ and also that $d$ is invariant by left
translations and is scaled up to a factor $\lambda$ when we apply
a dilation $\delta_\lambda$, \cite[p.705]{B-L-U}).
From (\ref{ineq-triangle}) we have that
\begin{equation}\label{ineq-triangle2}
{ d}(p,q)\le {d}(p,0)+{ d}(0,q).
\end{equation}
Thus, if $p,r,q\in\mathbb{G}$ then by invariance we also have
$$
d(p,q)\le d(p,r)+d(r,q).
$$
i.e. the triangular inequality for the distance function $d$.

It is worth remarking at thei point that the definition of the
gauge function $\widetilde{d}$ generalizes to arbitrary Carnot
groups but the corresponding function $d=d(\cdot,\cdot)$ is not in
general a distance but only a pseudo--distance,
\cite[p.300]{D-G-N}, \cite[p.231]{B-L-U}.

The key feature of the class of $H-$type Iwasawa groups is the
existence of a natural inversion, which generalizes the inversion
$\sigma(x)=-\frac{x}{|x|^{2}}$ of $\mathbb{R}^{N}\setminus{\{0\}}$
to arbitrary $H$--type groups. It plays an important role in the
proof   and we shall discuss some of its properties.

The inversion map $\sigma\colon
\mathbb{G}\setminus{\{0\}}\rightarrow \mathbb{G}\setminus{\{0\}}$
is defined by (cf. \cite[p.705]{B-L-U})
$$
\sigma(x,t)=\left(-\frac{|x|^{2}x-4\sum_{k=1}^{n}U^{k}x}{|x|^{4}+16|t|^{2}},-\frac{t}{|x|^{4}+16|t|^{2}}
\right),
$$
and satisfies $\sigma^2=id$.

We shall also consider the one point compactification
$\widehat{\mathbb{G}}$ of $\mathbb{G}$ by adding the point
$\infty$ at infinity. The distance $d$ is extended in
$\widehat{\mathbb{G}}$ in the obvious way and is denoted by the
same symbol: for every $p\in\mathbb{G}$,
$$
d(p,\infty)=+\infty,\quad d(\infty,\infty)=0.
$$
The actions of left translations and dilations are also extended
naturally in $\widehat{\mathbb{G}}$: left translation of any
element of $\widehat{\mathbb{G}}$ by $\infty$ maps it to $\infty$
and  the image $\infty$ by any $\lambda-$dilation is again
$\infty$. For the inversion $\sigma$  we set $\sigma(0)=\infty$
and $\sigma(\infty)=0$; the following holds \cite[p.706]{B-L-U}:
$$
d(\sigma(p),0)=\frac{1}{d(p,0)},\quad
d(\sigma(p),\sigma(q))=\frac{d(p,q)}{d(p,0)\;d(0,q)}.
$$
Note that the last equality holds in an $H$--type group
$\mathbb{G}$ if and only if $\mathbb{G}$ is Iwasawa
\cite[p.23]{C-D-K-R}.

Finally, for every $x\in\mathbb{R}^m\setminus\{0\}$ we define the
{\it standard $\mathbb{R}$--circle} $R_x$ passing through $0$ and
$\infty$ as the set
\[
R_x=\{(\lambda x,0):  \lambda \in \mathbb{R}\}.
\]
An {\it $\mathbb{R}$--circle} is the image of some $R_{x}$ under the
action of the {\it similarity} group $\widehat{\mathbb{G}}$: this
group comprises of maps which are composites of left translations, dilations and the
inversion $\sigma$ (see also \cite[p.9]{C-D-K-R}).


\section{Proof of Theorem 1}\label{proof}

Given four distinct  points $p_{1},p_{2},p_{3},p_{4}\in
\mathbb{G}$ we define their {\it cross--ratio} $\mathbb{X}(p_{1},p_{2},p_{3},$ $p_{4})$ by
\begin{equation}\label{cross ratio}
\mathbb{X}^{1/2}(p_{1},p_{2},p_{3},p_{4})=\frac{d(p_{1},p_{3})d(p_{2},p_{4})}{d(p_{1},p_{4})d(p_{2},p_{3})},
\end{equation}
and the definition is extended if one of the points is $\infty$ in
the obvious way. From the properties of left translations,
dilations and inversion stated in the previous paragraph, one may verify straightforwardly that the cross--ratio
$\mathbb{X}(p_{1},p_{2},p_{3},p_{4})$ is
invariant under the action of the similarity group of
$\widehat{\mathbb{G}}$. This allows us to normalize so that two of
the points of a given quadruple are $0$ and $\infty$.

Given a quadruple $p_1,p_2,p_3,p_4$ of distinct points in
$\widehat{\mathbb{G}}$, let $p_i,p_i,p_k,p_l$ a permutation of
these points. Let also
$$
\mathbb{X}_1=\mathbb{X}(p_i,p_j,p_k,p_l),\quad \mathbb{X}_2=\mathbb{X}(p_i,p_k,p_j,p_l).
$$
From (\ref{ptolemaean inequality}) and (\ref{cross ratio}) it
follows that the Ptolemaean inequality is satisfied  if and only
if
\begin{equation}\label{equiv ptolemaean}
\mathbb{X}_1^{1/2}+\mathbb{X}_2^{1/2}\geq 1.
\end{equation}
 We now normalise so that
$ p_i=\infty,\; p_l=0 $. Thus, we have
\[
\mathbb{X}_{1}^{1/2}=\frac{d(p_{j},0)}{d(p_{j},p_k)},\quad \text{ and }\quad
\mathbb{X}_{2}^{1/2}=\frac{d(p_{k},0)}{d(p_{j},p_k)}.
\]
Consequently, (\ref{equiv ptolemaean}) is equivalent to
\begin{equation}\label{triangular}
d(p_{j},0)+d(0,p_{k})\geq d(p_{j},p_k),
\end{equation} i.e. the triangular
inequality for the distance function $d$. Since the permutation
was arbitrary, this proves the Ptolemaean Inequality
(\ref{ptolemaean inequality}).

Moreover, if equality is valid (Ptolemaeus' Theorem), then
\begin{equation}
\mathbb{X}_1^{1/2}+\mathbb{X}_2^{1/2}= 1,
\end{equation}
which is equivalent to
\begin{equation}
d(p_{j},0)+d(0,p_{k})= d(p_{j},p_k).
\end{equation}

Thus, as we have seen in Section 2, this happens if and only if 
$p_{j}=(x,0)$ and $ p_{k}=(\lambda x,0)$ for some $lambda\in\R$. Since $p_{i}=\infty$ and
$p_{l}=0$ we conclude that $p_i, p_j, p_k, p_l$ are in a standard
$\mathbb{R}$--circle and the proof of the result is complete.

\vspace{10 mm}

Acknowledgements: The first author would like to thank Michel
Marias for his constant support and stimulating discussions.

\end{document}